\newtheorem{theorem}{Theorem}
\newtheorem{lemma}{Lemma}
\newtheorem{definition}{Definition}
\newtheorem{corollary}{Corollary}
\title{On a series for the upper incomplete Gamma function}
\author{Mario DeFranco}
\begin{document}

\maketitle

\abstract{We define an absolutely convergent series for the upper incomplete Gamma function $\Gamma(s,z)$ for $z\geq1$ and $s\in \mathbb{C}$. We express this series using certain polynomials which we define using the Stirling numbers of the first kind. We prove that these polynomials have positive coefficients by defining a three-parameter family of integers and certain linear operators on vector spaces of polynomials. We then apply this series to obtain a formula for the Riemann xi function valid at any $s \in \mathbb{C}$.}

\section{Introduction}

The Gamma function $\Gamma(s)$ is a special function important in many fields of mathematics, from statistics to number theory. L. Euler introduced $\Gamma(s)$
as a generalization of the factorial: for integer $n\geq0$, 
\[
\Gamma(n+1) = n!.
\]
For $s\in \mathbb{C}$, $\Gamma(s)$ may be defined by the integral 
\[
\Gamma(s) = \int_0^\infty e^{-u} u^{s-1} \, du
\]
with poles at the non-positive integers. See \cite{Davis} for a history of $\Gamma(s)$.

The upper incomplete Gamma function $\Gamma(s,z)$ defined as
\[
\Gamma(s,z)=\int_{z}^\infty e^{-u}u^{s-1}\, du
\]
was first investigated by F. E. Prym \cite{Prym} in 1877. See \cite{Jameson} for more information on $\Gamma(s,z)$. For $z > 0$ and $s$ real, we apply the change of variables $u \mapsto u z$ to obtain
\begin{equation} \label{int1}
\Gamma(s,z) = z^{s}\int_1 ^\infty e^{-u z} u^{s-1}\, du.
\end{equation}
The integral in \eqref{int1} appears in the definition of the Riemann xi function \cite{Riemann} in the following way: 
\begin{align} \label{xi}\nonumber
\xi(s) &= -\frac{s(1-s)}{2}\pi^{-\frac{s}{2}}\Gamma(\frac{s}{2})\zeta(s)\\
&= \frac{1}{2}(1-s(1-s)\sum_{n=1}^\infty \int_1^\infty e^{-\pi n^2 u} (u^{\frac{s}{2}-1}+u^{\frac{1-s}{2}-1} ) \, du).
\end{align}
In this paper we thus consider the integral 
\[
\int_1^\infty e^{-up} u^s \, du
\]
for $p\geq 1$ and $s \in \mathbb{C}$, and prove that it may be expressed as an absolutely convergent series
\[
\int_1^\infty e^{-up} u^s \, du = \frac{e^{-p}}{p}(1+ \sum_{n=1}^\infty \frac{g_n(s,p)}{(n+1)!})
\]
where $g_n(s,p)$ is a polynomial in $s$ and $p^{-1}$ defined below using the Stirling numbers of the first kind. For $p=1+x$, we prove that the two-variable polynomial $(1+x)^n g_n(s,1+x)$ has positive coefficients in $x$ and $s$. For the proof we define a three-parameter family of integers $A_{m,i}^k$ and certain linear operators on vector spaces of polynomials. We then apply this series to obtain a series for $\xi(s)$ absolutely convergent for any $s \in \mathbb{C}$. 

\section{The polynomials $g_n(s,p)$ }
To the integral 
\[
\int_1^\infty u^s e^{-u p}\, du
\]
we apply the change of variables 
\[
t = 1-e^{-up}
\]
to obtain
\[
\int_1^\infty u^s e^{-u p}\, du = \frac{1}{p^{s+1}}\int_{1-e^{-p}}^1 (-\log(1-t))^s \, dt.
\]
We thus consider the Taylor series of 
\[
(-\log(1-t))^s
\]
at $t=1-e^{-p}$ which we calculate in Lemma \ref{log derivative}.
We first recall the definition of the unsigned Stirling numbers of the first kind (see \cite{Graham}). 
\begin{definition} For integers $0 \leq i \leq n$, let
\[
 { n \brack i}
\]
denote the unsigned Stirling number of the first kind. These integers may be defined by the relation 
\[
{ n \brack i-1} +n { n \brack i} = { n +1 \brack i}
\]
and 
\[
{  0\brack n}={  n\brack 0}=\delta_{n,0}.
\]
\end{definition}

\begin{lemma}\label{log derivative}
For $n \geq 1$,
\[
(\frac{d}{dt})^n(-\log(1-t))^s = \frac{1}{(1-t)^n} \sum_{i=1}^{n}{ n \brack i} (s)_{i} (-\log(1-t))^{s-i}
\]
\end{lemma}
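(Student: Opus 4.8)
The plan is to prove the identity by induction on $n$, with the defining recurrence ${n\brack i-1}+n{n\brack i}={n+1\brack i}$ carrying the combinatorial content of the inductive step. For the base case $n=1$, a direct computation gives $\frac{d}{dt}(-\log(1-t))^s=\frac{s}{1-t}(-\log(1-t))^{s-1}$, which matches the claimed right-hand side since ${1\brack 1}=1$ and $(s)_1=s$.

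For the inductive step, suppose the formula holds for some $n\ge1$ and differentiate both sides once more. Writing $w=-\log(1-t)$, so that $\frac{dw}{dt}=(1-t)^{-1}$, the product rule applied to each summand $(1-t)^{-n}{n\brack i}(s)_iw^{s-i}$ produces two families of terms: differentiating $(1-t)^{-n}$ contributes a factor $n(1-t)^{-n-1}$, and differentiating $w^{s-i}$ contributes $(s-i)(1-t)^{-1}w^{s-i-1}$. This yields
\[
\Big(\tfrac{d}{dt}\Big)^{n+1}w^s=\frac{1}{(1-t)^{n+1}}\Big(\sum_{i=1}^{n}n{n\brack i}(s)_iw^{s-i}+\sum_{i=1}^{n}{n\brack i}(s)_i(s-i)w^{s-i-1}\Big).
\]
Next I would use the falling-factorial identity $(s)_i(s-i)=(s)_{i+1}$ and reindex $i\mapsto i-1$ in the second sum, turning it into $\sum_{i=2}^{n+1}{n\brack i-1}(s)_iw^{s-i}$, and then collect the coefficient of $(1-t)^{-n-1}(s)_iw^{s-i}$. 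For $2\le i\le n$ this coefficient is $n{n\brack i}+{n\brack i-1}={n+1\brack i}$, precisely the Stirling recurrence; for $i=1$ only the first sum contributes and $n{n\brack 1}={n+1\brack 1}$ since ${n\brack 0}=0$ for $n\ge1$; for $i=n+1$ only the reindexed sum contributes and ${n\brack n}=1={n+1\brack n+1}$. Assembling these gives the claimed formula with $n$ replaced by $n+1$, completing the induction.

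The only care needed is bookkeeping: tracking the index shift in the second sum and checking the two boundary terms $i=1$ and $i=n+1$ against the conventions ${n\brack 0}=\delta_{n,0}$ and ${n\brack i}=0$ for $i>n$. Once the observation $(s)_i(s-i)=(s)_{i+1}$ is made, everything else is mechanical, so I anticipate no genuine obstacle beyond this index-juggling.
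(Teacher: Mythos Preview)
Your proof is correct and follows essentially the same approach as the paper: induction on $n$, differentiating the induction hypothesis via the product rule, and then invoking the Stirling recurrence ${n\brack i-1}+n{n\brack i}={n+1\brack i}$ to collect terms. The only difference is that you spell out the intermediate steps (the identity $(s)_i(s-i)=(s)_{i+1}$, the reindexing, and the boundary cases $i=1$ and $i=n+1$) that the paper leaves implicit.
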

\begin{proof} 
We use induction on $n$. The statement is true for $n=1$. Assume it is true for some $n \geq 1$. Then by the induction hypothesis 
\[
(\frac{d}{dt})^{n+1}(-\log(1-t))^s = \frac{1}{(1-t)^{n+1}} \sum_{i=1}^{n+1}({ n \brack i-1} +n { n \brack i}) (s)_{i} (-\log(1-t))^{s-i}.
\]
The induction step now follows from the identity 
\[
{ n \brack i-1} +n { n \brack i} = { n+1 \brack i}.
\]
This completes the proof.
\end{proof}

We recall the notation for the falling factorial $(s)_i$:
\begin{definition}
For integer $i \geq 0$, let $(s)_i$ denote 
\[
(s)_i = \prod_{j=1}^i (s-j+1).
\]
\end{definition}
\begin{definition} 
For integer $n \geq 1$, define
\[
g_n(s,p) = \sum_{i=1}^n { n \brack i} \frac{(s)_i}{p^i}.
\]

\end{definition}

Now we have the standard identity 
\[
(s)_n = \sum_{i=1}^n (-1)^{n-i} {n \brack i } s^i \,\,\,\,\text{ with }\,\,\,\, (s)_0 = 1.
\]
Let $p=(1+x)$. Then in $(1+x)^n g_n(s,1+x)$, the coefficient of $x^js^{n-m}$ is  
\[
[ x^js^{n-m}] \big((1+x)^n g_n(s,1+x) \big)= \sum_{i=0}^m (-1)^i {m-i \choose j}{n \brack n-m+i} {n-m+i \brack n-m}.  
\]
For fixed $m$ and $i$, we consider the function of integer $n$ for $n \geq m$ 
\[
{n \brack n-m+i} {n-m+i \brack n-m}. 
\]
We consider the Newton series coefficients of this function next.
\subsection{The numbers $A_{m,i}^k$}

\begin{definition}
For integers $k \geq 0$ and fixed $m\geq i \geq 0$, we define the numbers $A_{m,i}^k \in \mathbb{Z}$  as the Newton series coefficients for $n \geq m$ 
\[
{n \brack n-m+i} {n-m+i \brack n-m} = \sum_{k=0}^{\infty} A_{m,i}^k {n \choose k}.
\]
\end{definition}

We prove the following recurrence relation for the  $A_{m,i}^k $. 

\begin{lemma} 
Let $0\leq i\leq m$ and $m \geq 1$.
 \begin{align*}
A_{m,i}^k &=(k-1)(A_{m-1,i}^{k-1} + A_{m-1,i}^{k-2}+ A_{m-1,i-1}^{k-2}) + (k+i-m-1)A_{m-1,i-1}^{k-1}\\
A_{0,0}^k &= \delta_{k,0} \\ 
A_{m,-1}^k&=0\\ 
A_{m,i}^{-1}&=A_{m,i}^{-2}=0.\\ 
\end{align*}
\end{lemma}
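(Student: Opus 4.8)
The plan is to reduce the stated identity to a \emph{functional} recurrence for the polynomials
\[
f_{m,i}(n):={n\brack n-m+i}{n-m+i\brack n-m},
\]
and then translate that recurrence into the language of Newton (forward difference) coefficients. First I would record the elementary setup: since ${n\brack n-a}$ agrees with a polynomial in $n$ of degree $2a$ for all integers $n\ge a$, the product $f_{m,i}(n)$ is a polynomial of degree $2m$, so its Newton expansion $f_{m,i}(n)=\sum_{k\ge0}A_{m,i}^k\binom nk$ is a finite sum, the $A_{m,i}^k$ are well defined, and they lie in $\mathbb Z$ because $f_{m,i}$ is integer valued. The boundary data are then immediate: $f_{0,0}\equiv1$ gives $A_{0,0}^k=\delta_{k,0}$; $f_{m,i}\equiv0$ whenever $i<0$ or $i>m$, since then one of the two Stirling factors has its lower index out of range, and this accounts both for $A_{m,-1}^k=0$ and for the vanishing of the out-of-range coefficients $A_{m-1,m}^k$ that appear in the recurrence when $i=m$; and $A_{m,i}^k=0$ for $k<0$ is the usual convention for an empty index.

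The core of the argument is the functional recurrence
\[
(\Delta f_{m,i})(n)=n\,f_{m-1,i}(n)+(n-m+i)\,f_{m-1,i-1}(n),\qquad(\Delta g)(n):=g(n+1)-g(n),
\]
for $m\ge1$ and $0\le i\le m$. To prove it I would apply the first-kind Stirling recurrence ${N\brack K}={N-1\brack K-1}+(N-1){N-1\brack K}$ twice. Expanding the factor ${n\brack n-m+i}$ once writes $f_{m,i}(n)$ as ${n-1\brack n-m+i-1}{n-m+i\brack n-m}+(n-1)f_{m-1,i}(n-1)$, where I use the bookkeeping identity ${n-1\brack n-m+i}{n-m+i\brack n-m}=f_{m-1,i}(n-1)$. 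Expanding the factor ${n-m+i\brack n-m}$ in the remaining term then rewrites it as $f_{m,i}(n-1)+(n-m+i-1)f_{m-1,i-1}(n-1)$, using ${n-1\brack n-m+i-1}{n-m+i-1\brack n-m-1}=f_{m,i}(n-1)$ and ${n-1\brack n-m+i-1}{n-m+i-1\brack n-m}=f_{m-1,i-1}(n-1)$. Collecting the three pieces gives $f_{m,i}(n)-f_{m,i}(n-1)=(n-1)f_{m-1,i}(n-1)+(n-m+i-1)f_{m-1,i-1}(n-1)$, which is the displayed identity after $n\mapsto n+1$. Throughout, these should be read as identities of polynomials in $n$; this is legitimate because both sides agree for all sufficiently large integers $n$, so evaluating $f$ at $n-1$ is harmless even though $f_{m,i}$ was introduced only for $n\ge m$.

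The final step is purely formal: expand both sides of the functional recurrence in the basis $\binom nk$. On the left, $\Delta\binom nk=\binom n{k-1}$ gives $(\Delta f_{m,i})(n)=\sum_k A_{m,i}^{k+1}\binom nk$. On the right, the identity $n\binom nk=(k+1)\binom n{k+1}+k\binom nk$ turns multiplication by $n$ into the rule $B^k\mapsto k(B^{k-1}+B^k)$ on Newton coefficients, so that $n f_{m-1,i}(n)=\sum_k k\bigl(A_{m-1,i}^{k-1}+A_{m-1,i}^k\bigr)\binom nk$ and $(n-m+i)f_{m-1,i-1}(n)=\sum_k\bigl(kA_{m-1,i-1}^{k-1}+(k-m+i)A_{m-1,i-1}^k\bigr)\binom nk$. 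Comparing coefficients of $\binom nk$ and then replacing $k$ by $k-1$ yields exactly
\[
A_{m,i}^k=(k-1)\bigl(A_{m-1,i}^{k-1}+A_{m-1,i}^{k-2}+A_{m-1,i-1}^{k-2}\bigr)+(k+i-m-1)A_{m-1,i-1}^{k-1},
\]
and a direct check shows this recurrence, together with the conventions recorded above, also reproduces $A_{m,i}^0=0$ for $m\ge1$, $A_{m,-1}^k=0$, and $A_{m,i}^{-1}=A_{m,i}^{-2}=0$. The only step that is not bookkeeping is the one in the middle paragraph: choosing \emph{which} Stirling factor to expand at each of the two applications of the recurrence, so that every intermediate product folds back into the family $\{f_{m,i}\}$; once that pattern is spotted, the rest is mechanical.
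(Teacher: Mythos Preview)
Your argument is correct and is essentially the paper's own proof: both derive the same functional identity $(\Delta f_{m,i})(n)=n\,f_{m-1,i}(n)+(n-m+i)\,f_{m-1,i-1}(n)$ by two applications of the Stirling recurrence, and then extract the claimed relation by expanding in the Newton basis via $\Delta\binom nk=\binom n{k-1}$ and $n\binom nk=k\binom nk+(k+1)\binom n{k+1}$. The only cosmetic difference is that you expand $f_{m,i}(n)$ down to $f_{m,i}(n-1)$ and then shift, whereas the paper applies the recurrence in its difference form ${n+1\brack n+1-a}-{n\brack n-a}=n{n\brack n+1-a}$ directly to $f_{m,i}(n+1)-f_{m,i}(n)$; your added remark that the identity holds as an equality of polynomials in $n$ is a welcome point the paper leaves implicit.
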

Furthermore for $m \geq 1$
\[
A_{m,i}^k=0
\]
if $k\leq m$ or $k \geq 2m+1$. 
\begin{proof} 
The boundary condition $\displaystyle A_{m,-1}^k=0$ follows from 
\[
{n-m-1 \brack n-m}=0
\]
for all $n-m\geq0$, and $\displaystyle A_{0,0}^k = \delta_{k,0}$ follows from 
\[
{n\brack n}=1
\]
for all $n \geq 0$. 
And we set $\displaystyle A_{m,i}^{-1}=A_{m,i}^{-2}=0$ because the Newton series requires $k \geq 0$. 

We will use the identities 
\begin{equation} \label{Stirling identity}
{n+1 \brack n+1-m} - {n\brack n-m} = n {n\brack n+1-m}
\end{equation}
and 
\begin{equation} \label{n binomial identity}
n {n \choose k} = k {n \choose k} +(k+1){n \choose k+1}.
\end{equation}

First, we use
\[
{n+1\choose k} - {n\choose k} ={n\choose k-1} 
\]
and obtain
\begin{align}\label{Amik}
&\sum_{k=0}^{\infty} A_{m,i}^k {n \choose k-1}\\ \label{Stirling difference}
=&{n+1 \brack n+1-m+i} {n+1-m+i \brack n+1-m} - {n \brack n-m+i} {n-m+i \brack n-m}. 
\end{align} 
Next we apply identity \eqref{Stirling identity} to line \eqref{Stirling difference}
and get
\begin{align*}
=& {n\brack n-m+i} {n+1-m+i \brack n+1-m} + n {n\brack n+1-m}  {n+1-m+i \brack n+1-m}   - {n \brack n-m+i} {n-m+i \brack n-m} \\
=& {n\brack n-m+i} ({n+1-m+i \brack n+1-m} -{n-m+i \brack n-m} ) + n {n\brack n+1-m+i}  {n+1-m+i \brack n+1-m}. \\ 
\end{align*}
We apply identity \eqref{Stirling identity} again and get
\begin{align*}
&= (n-m+i){n\brack n-m+i} {n-m+i \brack n-m+1}+ n {n\brack n+1-m+i}  {n+1-m+i \brack n+1-m}\\ 
&=(n-m+i)   (\sum_{k=0}^{\infty} A_{m-1,i-1}^k {n \choose k})+ n (\sum_{k=0}^{\infty} A_{m-1,i}^k {n \choose k}). \\
\end{align*}
Now we apply identity \eqref{n binomial identity} and get
\begin{align*} 
& = (-m+i)   (\sum_{k=0}^{\infty} A_{m-1,i-1}^k {n \choose k}) + \sum_{k=0}^\infty A_{m-1,i-1}^k (k{n \choose k}+ (k+1){n \choose k+1}) \\ 
& \,\, + \sum_{k=0}^{\infty} A_{m-1,i}^k(k{n \choose k}+ (k+1){n \choose k+1}) \\
\end{align*}
Thus equating the the coefficients of $\displaystyle {n \choose k-1}$ in the above sum and in line \eqref{Amik}, we get 
\begin{equation}\label{recurrence}
A_{m,i}^k =(k-1)(A_{m-1,i}^{k-1} + A_{m-1,i}^{k-2}+ A_{m-1,i-1}^{k-2}) + (k+i-m-1)A_{m-1,i-1}^{k-1}.
\end{equation}

We next show for $m \geq 1$ that 
\[
A_{m,i}^k=0
\]
if $k\leq m$ or $k \geq 2m+1$. We use induction on $m$. It is true for $m=1$ because 
\[
{n \brack n-1} = {n \choose 2}.
\]
 Assume it is true for some $m \geq 1$. Then the induction step follows from relation \eqref{recurrence}.

\end{proof}

\section{The positivity of $g_n(s,p)$}
By definition of $A_{m,i}^k$, we have for $n \geq 1$
\begin{align}\label{g coefficients}
 \big(1+x)^n g_n(s,1+x) &= s^n+\sum_{m=1}^n \sum_{j=0}^m s^{n-m}x^j \sum_{k=m+1}^{2m} {n \choose k} \sum_{i=0}^m (-1)^i {m-i \choose j}A_{m,i}^k 
\end{align}
where the coefficient of $s^n$ is $A_{0,0}^0 = 1$.
We thus show that 
\begin{equation} \label{j Amik}
\sum_{i=0}^m (-1)^i {m-i \choose j}A_{m,i}^k \geq 0.
\end{equation}
We express the quantity \eqref{j Amik} using certain linear operators defined next. 
\subsection{The operators $B_1,B_2,C_1$ and $C_2$}

\begin{definition}
 For each integer $k \geq 0$, let $t_k$ be an indeterminate.
 Define the linear operators $B_1, C_1: \mathbb{R}[t_{k-1}] \rightarrow \mathbb{R}[t_k] $ by 
 \begin{align*}
B_1(t_{k-1}^{n}) &= (k-1) t_{k}^n \\ 
C_1(t_{k-1}^n) &= -n t_{k}^{n+1}.
\end{align*}
 These operators $B_1$ and $C_1$ correspond  to going from the $A_{m-1,i}^{k-1}$ and $ A_{m-1,i-1}^{k-1}$, respectively, to $A_{m,i}^k$. The exponent $n$ corresponds to $k-m+i-1$.
 
 Define the linear operators $B_2, C_2: \mathbb{R}[t_{k-2}] \rightarrow \mathbb{R}[t_k] $ by 
 \begin{align*} 
B_2(t_{k-2}^n) &= (k-1)t_k^{n+1}\\
C_2(t_{k-2}^n) &= -(k-1)t_k^{n+2}. 
\end{align*}
These operators $B_2$ and $C_2$ correspond to going from the 
$ A_{m-1,i}^{k-2}$ and $ A_{m-1,i-1}^{k-2}$, respectively, to $A_{m,i}^k$.

For $m\geq 1$ and $m+1 \leq k \leq 2m$, define the polynomial $\mathcal{A}_{m,i}^k (t_k)$ to be the sum of all terms of the form
\[
Y_m..Y_1(t_0^0)
\]
where $Y_1$ is either $B_2$ or $C_2$, exactly $i$ of the $Y_h$ are either $C_1$ or $C_2$, and exactly $k-m$ of the $Y_h$ are either $B_2$ or $C_2$. Set $\mathcal{A}_{0,0}^{0}(t_0)=1$ and $A_{m,i}^{k}(t_k)=0$ otherwise. 

Let $\mathcal{A}_{m,i}^k (t)$ denote this polynomial evaluated at an arbitrary indeterminate $t$.

\end{definition}

\begin{lemma} \label{C1}
The polynomial $\mathcal{A}_{m-1,i-1}^{k-1} (t_{k-1})$ is an integer multiple of 
\[
t_{k-1}^{k-m+i-1}.
\]
\end{lemma}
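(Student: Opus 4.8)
The plan is to prove the slightly more general fact that $\mathcal{A}_{m,i}^k(t)$ is always an integer multiple of the single monomial $t^{k-m+i}$, and then to read off the lemma by the substitution $m \mapsto m-1$, $i \mapsto i-1$, $k \mapsto k-1$, which turns the exponent $k-m+i$ into $(k-1)-(m-1)+(i-1) = k-m+i-1$.

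First I would record the effect of each of the four operators on a monomial with integer coefficient. Writing such a monomial as $c\,t^n$ with $c \in \mathbb{Z}$: $B_1$ sends it to an integer multiple of $t^n$; $C_1$ and $B_2$ each send it to an integer multiple of $t^{n+1}$; and $C_2$ sends it to an integer multiple of $t^{n+2}$. Two features matter: (i) all four operators have integer coefficients, so starting from $t_0^0 = 1 \in \mathbb{Z}[t_0]$ every monomial that arises while applying a word $Y_m\cdots Y_1$ has an integer coefficient; and (ii) the exponent increment is deterministic — it does not depend on $c$ — so even if a coefficient vanishes partway through a word the formal exponent still increases as prescribed, and that word merely contributes the zero multiple of the final monomial.

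Next I would package this as: the exponent increment produced by $Y_h$ equals $[\,Y_h \in \{C_1,C_2\}\,] + [\,Y_h \in \{B_2,C_2\}\,]$, where $[\,\cdot\,]$ denotes an indicator. Summing over the $m$ letters of an admissible word $Y_m\cdots Y_1$ contributing to $\mathcal{A}_{m,i}^k$, and using the two defining constraints — exactly $i$ of the letters lie in $\{C_1,C_2\}$ and exactly $k-m$ of them lie in $\{B_2,C_2\}$ — the exponent of $Y_m\cdots Y_1(t_0^0)$ equals $i + (k-m) = k-m+i$, independent of the chosen admissible word. Hence every such term is an integer multiple of $t^{k-m+i}$, and so is their sum $\mathcal{A}_{m,i}^k(t)$; when the indices fall outside the admissible range the polynomial is $0$ by definition, which is again an integer multiple of $t^{k-m+i}$. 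Specializing yields Lemma~\ref{C1}.

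There is essentially no analytic obstacle here; the argument is pure bookkeeping, and the one point I would check carefully is the identity tying the number of ``level-raising-by-two'' letters ($B_2$ or $C_2$) in an admissible word to the index gap $k-m$: a word $Y_m\cdots Y_1$ that starts in $\mathbb{R}[t_0]$ and ends in $\mathbb{R}[t_k]$ uses $\ell_1$ letters from $\{B_1,C_1\}$ and $\ell_2$ from $\{B_2,C_2\}$ with $\ell_1+\ell_2 = m$ and $\ell_1 + 2\ell_2 = k$, forcing $\ell_2 = k-m$; this is exactly the second defining constraint, so the degree count goes through verbatim.
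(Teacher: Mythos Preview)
Your proof is correct and follows essentially the same approach as the paper: both arguments count the exponent increment of each operator ($0$ for $B_1$, $1$ for $B_2$ and $C_1$, $2$ for $C_2$), then use the two defining constraints $\#C_1+\#C_2=i$ and $\#B_2+\#C_2=k-m$ to conclude that every admissible word lands on the single monomial $t^{k-m+i}$. Your indicator identity $[\,Y_h\in\{C_1,C_2\}\,]+[\,Y_h\in\{B_2,C_2\}\,]$ is just a tidy repackaging of the paper's computation $\#B_2+\#C_1+2\#C_2=(\#C_1+\#C_2)+(\#B_2+\#C_2)$, and proving it for general $(m,i,k)$ before specializing is a cosmetic difference.
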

\begin{proof} 
 Suppose the product of operators 
\[
Y_{m-1}..Y_1
\]
contributes a term to $\mathcal{A}_{m-1,i-1}^{k-1} (t_{k-1})$ and
let $\#B_l$ and $\#C_l$ for $l=1,2$ denote the number of times that operator appears in the product. By definition, the operator $B_1$ preserves the exponent of the indeterminate; $B_2$ and $C_1$ increment the exponent by 1; and $C_2$ increments it by $2$. Thus 
\[
Y_{m-1}..Y_1(t_0^0)
\]
is an integer multiple of 
\[
t_{k-1}^{\#B_2 + \#C_1 + 2\#C_2}
\]
and we have
\begin{align*}
   \#C_1+\#C_2 &= i-1\\
   \#B_2+\#C_2 &=(k-1)-(m-1) 
\end{align*}
by the assumption that the product contributes a term to $\mathcal{A}_{m-1,i-1}^{k-1} (t_{k-1})$. 
These equations imply 
\[
\#B_2 + \#C_1 + 2\#C_2 = k-m+i-1.
\]
This completes the proof.
\end{proof}

\begin{lemma} \label{mathcalA relation}
\[
\mathcal{A}_{m,i}^k(t_k) = (k-1)( \mathcal{A}_{m-1,i}^{k-1}(t_k) -t_k^2\mathcal{A}_{m-1,i-1}^{k-2}(t_k) +t_k\mathcal{A}_{m-1,i}^{k-2}(t_k)) -(k-m+i-1)t_k\mathcal{A}_{m-1,i-1}^{k-1}(t_k). 
\]
\end{lemma}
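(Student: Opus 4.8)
The plan is to prove the identity by decomposing the defining sum for $\mathcal{A}_{m,i}^k(t_k)$ according to the last–applied operator $Y_m$. For $m \geq 1$ this $Y_m$ is one of $B_1, C_1, B_2, C_2$, and in the product $Y_m\cdots Y_1 = Y_m\circ(Y_{m-1}\cdots Y_1)$ the requirement that $Y_1 \in \{B_2,C_2\}$ is untouched, while deleting $Y_m$ decreases the number of operators in $\{C_1,C_2\}$ by $0$ if $Y_m \in \{B_1,B_2\}$ and by $1$ if $Y_m \in \{C_1,C_2\}$, and decreases the number of operators in $\{B_2,C_2\}$ by $0$ if $Y_m \in \{B_1,C_1\}$ and by $1$ if $Y_m \in \{B_2,C_2\}$. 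Hence the prefixes $Y_{m-1}\cdots Y_1$ arising when $Y_m$ equals $B_1$, $C_1$, $B_2$, $C_2$ run, bijectively and with the correct multiplicities, over exactly the terms defining $\mathcal{A}_{m-1,i}^{k-1}$, $\mathcal{A}_{m-1,i-1}^{k-1}$, $\mathcal{A}_{m-1,i}^{k-2}$, $\mathcal{A}_{m-1,i-1}^{k-2}$, respectively (these polynomials living naturally in $\mathbb{R}[t_{k-1}]$ for the first two and in $\mathbb{R}[t_{k-2}]$ for the last two). By linearity of the four operators this yields
\[
\mathcal{A}_{m,i}^k(t_k) = B_1\big(\mathcal{A}_{m-1,i}^{k-1}(t_{k-1})\big) + C_1\big(\mathcal{A}_{m-1,i-1}^{k-1}(t_{k-1})\big) + B_2\big(\mathcal{A}_{m-1,i}^{k-2}(t_{k-2})\big) + C_2\big(\mathcal{A}_{m-1,i-1}^{k-2}(t_{k-2})\big).
\]

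Next I would evaluate the four summands directly from the definitions of $B_1, B_2, C_1, C_2$. Since $B_1$ sends $t_{k-1}^n \mapsto (k-1)t_k^n$, $B_2$ sends $t_{k-2}^n \mapsto (k-1)t_k^{n+1}$, and $C_2$ sends $t_{k-2}^n \mapsto -(k-1)t_k^{n+2}$, each of these three acts on an arbitrary polynomial $P$ as $P(t_{k-1}) \mapsto (k-1)P(t_k)$, $P(t_{k-2}) \mapsto (k-1)t_kP(t_k)$, and $P(t_{k-2}) \mapsto -(k-1)t_k^2P(t_k)$ respectively; this gives the first three terms on the right-hand side of the lemma, namely $(k-1)\mathcal{A}_{m-1,i}^{k-1}(t_k)$, $(k-1)t_k\mathcal{A}_{m-1,i}^{k-2}(t_k)$ and $-(k-1)t_k^2\mathcal{A}_{m-1,i-1}^{k-2}(t_k)$. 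The $C_1$ summand is the only one whose operator coefficient depends on the exponent, and here I would invoke Lemma \ref{C1}: it says $\mathcal{A}_{m-1,i-1}^{k-1}(t_{k-1}) = c\,t_{k-1}^{k-m+i-1}$ for some $c \in \mathbb{Z}$, so $C_1\big(\mathcal{A}_{m-1,i-1}^{k-1}(t_{k-1})\big) = -(k-m+i-1)\,c\,t_k^{k-m+i} = -(k-m+i-1)\,t_k\,\mathcal{A}_{m-1,i-1}^{k-1}(t_k)$, which is the last term. Summing the four contributions gives the claimed identity.

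The only real work is the combinatorial bookkeeping in the first paragraph: checking that grouping the defining products by the value of $Y_m$ is a genuine partition, that the four index-shifts are exactly as claimed, and that the stated conventions ($\mathcal{A}_{m,-1}^{k} = 0$, $\mathcal{A}_{m,i}^k = 0$ for $k \notin \{m+1,\dots,2m\}$, $\mathcal{A}_{0,0}^0 = 1$) make the identity hold in the degenerate cases — for instance when $i=0$ (no $C$-operators, and the $C_1,C_2$ terms vanish since $\mathcal{A}_{m-1,-1}^{k}=0$), when $k=m+1$ (no room for a $2$-type $Y_m$, and the $B_2,C_2$ terms vanish since $\mathcal{A}_{m-1,i'}^{m-1}=0$), and when $m=1$ (only $Y_1 \in \{B_2,C_2\}$ is allowed, and the $B_1,C_1$ terms vanish because $k \geq 2$ forces $\mathcal{A}_{0,i'}^{k-1}=0$). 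Everything else is a one-line computation from the definitions, plus the single appeal to Lemma \ref{C1} to pin down the exponent annihilated by $C_1$.
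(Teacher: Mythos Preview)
Your proposal is correct and follows essentially the same approach as the paper's own proof: decompose $\mathcal{A}_{m,i}^k$ according to the outermost operator $Y_m$, then evaluate $B_1$, $B_2$, $C_2$ directly from their definitions and handle the $C_1$ term via Lemma~\ref{C1}. Your write-up is in fact more careful than the paper's, which simply asserts the four-term decomposition ``by definition'' without spelling out the combinatorial bookkeeping or the boundary cases $i=0$, $k=m+1$, $m=1$ that you verify explicitly.
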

\begin{proof} 
By definition 
\[
\mathcal{A}_{m,i}^k(t_k) = B_1( \mathcal{A}_{m-1,i}^{k-1}(t_{k-1})) + C_2(\mathcal{A}_{m-1,i-1}^{k-2}(t_{k-2})) +B_2(\mathcal{A}_{m-1,i}^{k-2}(t_{k-2}))+C_1(\mathcal{A}_{m-1,i-1}^{k-1}(t_{k-1})). 
\]
By definition of the operators, 
we have 
\begin{align*}
  B_1( \mathcal{A}_{m-1,i}^{k-1}(t_{k-1})) = (k-1)\mathcal{A}_{m-1,i}^{k-1}(t_{k}) \\ 
  C_2(\mathcal{A}_{m-1,i-1}^{k-2}(t_{k-2})) = -(k-1)t_{k}^2\mathcal{A}_{m-1,i-1}^{k-2} (t_{k})\\ 
  B_2(\mathcal{A}_{m-1,i}^{k-2}(t_{k-2})) = (k-1)t_{k}\mathcal{A}_{m-1,i}^{k-2}(t_{k}),
\end{align*}
and by Lemma \ref{C1} we have
\[
C_1(\mathcal{A}_{m-1,i-1}^{k-1}(t_{k-1})) = -(k-m+i-1)t_{k}\mathcal{A}_{m-1,i-1}^{k-1}(t_k).
\]
This completes the proof.
\end{proof}
\begin{corollary} \label{A poly A}
\[
(-1)^iA_{m,i}^k = \mathcal{A}_{m,i}^k(1)
\]
\end{corollary}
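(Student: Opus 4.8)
The plan is to prove the identity by induction on $m$, matching the recurrence \eqref{recurrence} for $A_{m,i}^k$ against the relation for $\mathcal{A}_{m,i}^k(t_k)$ from Lemma \ref{mathcalA relation} after specializing $t_k = 1$. For the base case $m=0$, both sides equal $1$ when $(i,k)=(0,0)$ and vanish otherwise, by the conventions $A_{0,0}^k=\delta_{k,0}$, $\mathcal{A}_{0,0}^0(t_0)=1$, and $\mathcal{A}_{m,i}^k=0$ in all other cases. Before doing the inductive step I would record that the two families have the same support: $A_{m,-1}^k = 0 = \mathcal{A}_{m,-1}^k$, the values with $k\le m$ or $k\ge 2m+1$ vanish for both (for $A$ this is the last part of the preceding lemma; for $\mathcal{A}$ it is the range restriction $m+1\le k\le 2m$ built into its definition), and similarly $A_{m,i}^{-1}=A_{m,i}^{-2}=0$ with no corresponding $\mathcal{A}$-terms arising.

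For the inductive step, fix $m\ge 1$ and assume $\mathcal{A}_{m-1,j}^l(1) = (-1)^j A_{m-1,j}^l$ for all $j,l$. Evaluating Lemma \ref{mathcalA relation} at $t_k=1$ gives
\[
\mathcal{A}_{m,i}^k(1) = (k-1)\bigl(\mathcal{A}_{m-1,i}^{k-1}(1) - \mathcal{A}_{m-1,i-1}^{k-2}(1) + \mathcal{A}_{m-1,i}^{k-2}(1)\bigr) - (k-m+i-1)\,\mathcal{A}_{m-1,i-1}^{k-1}(1).
\]
Substituting the inductive hypothesis and using $-(-1)^{i-1} = (-1)^i$ on the two terms that carry the index $i-1$, every summand picks up the common factor $(-1)^i$, and since $k-m+i-1 = k+i-m-1$ the bracketed combination is precisely the right-hand side of \eqref{recurrence}. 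Hence $\mathcal{A}_{m,i}^k(1) = (-1)^i A_{m,i}^k$, which closes the induction.

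The one place where care is genuinely needed — and the step I would flag as the main (though mild) obstacle — is verifying that no term on the right of Lemma \ref{mathcalA relation} or of \eqref{recurrence} lands outside the range where the inductive hypothesis is available; concretely, one must check that the shifted indices $(m-1,i,k-1)$, $(m-1,i-1,k-2)$, $(m-1,i,k-2)$, $(m-1,i-1,k-1)$ either lie in the valid window $m\le k-1\le 2m-1$ (resp.\ $m\le k-2$) or else produce a term that is zero on both sides. This is exactly guaranteed by the vanishing statements above, whose consistency for $\mathcal{A}$ rests on the exponent bookkeeping of Lemma \ref{C1}; once that is noted, the argument is a direct termwise comparison.
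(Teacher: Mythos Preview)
Your argument is correct and is essentially the same as the paper's: the paper simply notes that, by Lemma \ref{mathcalA relation} evaluated at $t_k=1$, the numbers $(-1)^i\mathcal{A}_{m,i}^k(1)$ obey the recurrence \eqref{recurrence} with the same boundary data as $A_{m,i}^k$, which is exactly the induction you have written out in detail. Your extra care about the support and boundary indices is a welcome elaboration of what the paper leaves implicit.
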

\begin{proof}
From Lemma \ref{mathcalA relation} the numbers $(-1)^i\mathcal{A}_{m,i}^k(1)$ satisfy the same recurrence relation as the $A_{m,i}^k$.
\end{proof}

\begin{definition}
Define $f(k,M,N) \in \mathbb{R}[t_k]$ to be
\[
f(k,M,N) = t_{k}^M(1-t_k)^N. 
\]
If $k,M,$ and $N\geq0$, and
 \[
k-M-N\geq 0,
\]
then we say that such an $f(k,M,N)$ satisfies the positivity condition.

 We say that an operator $L$ preserves the positivity condition if $L$ maps an $f(k,M,N)$ that satisfies the positivity condition to a positive-coefficient linear combination of terms $f(k',M',N')$ that each satisfy the positivity condition.
\end{definition}

\begin{lemma}\label{preserve}
The operators 
\[
B_1+C_1, \,\, B_2 + C_2, \, \,\, B_1,\, \, \text{ and } B_2 
\]
each preserve the positivity condition.
\end{lemma}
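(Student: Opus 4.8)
The plan is to rewrite each of the four operators as a single multiplication or first‑order differential operator on a polynomial ring, apply it to a generator $f(\,\cdot\,,M,N)=t^{M}(1-t)^{N}$, and re‑expand the output in the $f$‑basis, reading off the sign and degree constraints directly. Writing $D=d/dt$ and identifying source and target rings by relabelling the indeterminate, one has
\[
B_1=(k-1)\,\mathrm{id},\qquad C_1=-t^{2}D,\qquad B_2=(k-1)t,\qquad C_2=-(k-1)t^{2},
\]
the only nonobvious identity being $C_1(t^{n})=-n t^{n+1}=-t^{2}(t^{n})'$. Hence $B_1+C_1=(k-1)\,\mathrm{id}-t^{2}D$ and $B_2+C_2=(k-1)\,t(1-t)$.

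The operators $B_1$, $B_2$, and $B_2+C_2$ are then immediate: I would compute $B_1(f(k-1,M,N))=(k-1)f(k,M,N)$, $\;B_2(f(k-2,M,N))=(k-1)f(k,M+1,N)$, and $(B_2+C_2)(f(k-2,M,N))=(k-1)f(k,M+1,N+1)$. In each case the output is a single $f$‑term with the nonnegative coefficient $k-1$, and it satisfies the positivity condition: for $B_1$ because $M+N\le k-1<k$, and for $B_2$ and $B_2+C_2$ because the input condition $(k-2)-M-N\ge 0$ yields $(M+1)+N\le k-1$ and $(M+1)+(N+1)=M+N+2\le k$ respectively.

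The case requiring care is $B_1+C_1$. Applying $(k-1)\,\mathrm{id}-t^{2}D$ to $f(k-1,M,N)=t^{M}(1-t)^{N}$ and using $\frac{d}{dt}\big(t^{M}(1-t)^{N}\big)=M t^{M-1}(1-t)^{N}-N t^{M}(1-t)^{N-1}$ gives
\[
(B_1+C_1)\big(f(k-1,M,N)\big)=(k-1)t^{M}(1-t)^{N}-M t^{M+1}(1-t)^{N}+N t^{M+2}(1-t)^{N-1}.
\]
The key step is to absorb the potentially negative middle term by writing $(k-1)-Mt=(k-1-M)+M(1-t)$, which rewrites the right‑hand side as
\[
(k-1-M)\,f(k,M,N)+M\,f(k,M,N+1)+N\,f(k,M+2,N-1).
\]
All three coefficients are then nonnegative, the crucial inequality being $k-1-M\ge N\ge0$, i.e.\ exactly the input positivity condition $(k-1)-M-N\ge0$; and each output term satisfies the positivity condition since the largest exponent sum occurring is $M+N+1\le k$. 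Boundary cases $M=0$ or $N=0$ only delete terms, so they cause no trouble.

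I expect the main obstacle to be precisely that third paragraph: a naive expansion of $B_1+C_1$ produces the sign‑indefinite term $-M t^{M+1}(1-t)^{N}$, and the whole lemma hinges on recognising that the substitution $(k-1)-Mt=(k-1-M)+M(1-t)$ makes all resulting coefficients nonnegative once the positivity hypothesis on the input is used. Everything else is routine bookkeeping of the inequalities $M'+N'\le k$.
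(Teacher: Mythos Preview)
Your proof is correct and follows essentially the same approach as the paper: both identify $C_1$ with $-t^{2}D$, compute the action of each operator on $t^{M}(1-t)^{N}$, and use the rewriting $(k-1)-Mt=(k-1-M)+M(1-t)$ to exhibit $(B_1+C_1)f$ as a nonnegative combination of three $f$-terms satisfying the positivity condition, with the boundary case $N=0$ handled by the vanishing coefficient. The only difference is cosmetic indexing (you apply to $f(k-1,M,N)$ and land in index $k$, while the paper applies to $f(k,M,N)$ and lands in index $k+1$), so your coefficient $k-1-M$ corresponds to the paper's $k-M$.
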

\begin{proof}
We use 
\[
C_1(t_k^n) = \left(-t_k^2\frac{d}{dt_k}( t_k^n)\right) |_{t_k \mapsto t_{k+1} }. 
\]
Then we calculate
\begin{align*}
(B_1+C_1)(t_k^M (1-t_k)^N) = & \,(k-M)f(k+1,M,N) + Mf(k+1,M,N+1) \\ 
&+ Nf(k+1,M+2,N-1).
\end{align*}
Thus if $f(k,MN)$ satisfies the positivity condition, each of the terms on the right have non-negative coefficients and also satisfy the positivity condition, except for possibly 
\[
Nf(k+1,M+2,N-1)
\] 
which may have $N-1<0$. But if $N\geq0$ and $N-1<0$, then $N=0$ and this term has 0 coefficient.

We next calculate
\begin{align*}
(B_2+C_2)(t_k^M (1-t_k)^N) = & \,(k+1)f(k+2,M+1,N+1),\\
\end{align*}
\[
B_1(f(k,M,N)) = kf(k+1,M,N),
\]
and 
\[
B_2(f(k,M,N)) = (k+1)f(k+2,M+1,N).
\]


This completes the proof.
\end{proof}

\begin{lemma} \label{BCB}
For integers $(m+1) \geq 1$, $(m+1)+1 \leq k \leq 2(m+1)$, and $0 \leq j \leq (m+1)$,
\begin{align*}
&\sum_{l=1}^2 \left((B_l+C_l)(\sum_{i=0}^m {m-i \choose j}\mathcal{A}_{m,i}^{k-l}(t_{k-l}) ) + B_l( \sum_{i=0}^m{m-i \choose j-1}\mathcal{A}_{m,i}^{k-l}(t_{k-l})) \right)\\
=& \sum_{i=0}^{m+1}{m+1-i \choose j}\mathcal{A}_{m+1,i}^k(t_k).
\end{align*}
\end{lemma}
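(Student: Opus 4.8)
The plan is to prove the identity by induction on $m$, tracking how the recurrence for $\mathcal{A}_{m,i}^k(t_k)$ in Lemma \ref{mathcalA relation} interacts with the binomial coefficients $\binom{m-i}{j}$ that appear in the combination \eqref{j Amik}. The left-hand side of the claimed identity is built entirely out of the four single-step operators $B_l+C_l$ and $B_l$ ($l=1,2$) applied to the polynomials $\mathcal{A}_{m,i}^{k-l}(t_{k-l})$, so the natural strategy is to expand the right-hand side using Lemma \ref{mathcalA relation} and then reorganize the resulting sum over $i$ into exactly the shape of the left-hand side. Concretely, I would substitute
\[
\mathcal{A}_{m+1,i}^k(t_k) = (k-1)\big(\mathcal{A}_{m,i}^{k-1}(t_k) - t_k^2\mathcal{A}_{m,i-1}^{k-2}(t_k) + t_k\mathcal{A}_{m,i}^{k-2}(t_k)\big) - (k-m+i-2)t_k\mathcal{A}_{m,i-1}^{k-1}(t_k)
\]
into $\sum_{i=0}^{m+1}\binom{m+1-i}{j}\mathcal{A}_{m+1,i}^k(t_k)$, and then use the Pascal identity $\binom{m+1-i}{j} = \binom{m-i}{j} + \binom{m-i}{j-1}$ to split the sum into a part weighted by $\binom{m-i}{j}$ and a part weighted by $\binom{m-i}{j-1}$.

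The key bookkeeping step is to reindex the terms coming from $\mathcal{A}_{m,i-1}^{k-1}$ and $\mathcal{A}_{m,i-1}^{k-2}$ by shifting $i \mapsto i+1$, so that every polynomial appearing is of the form $\mathcal{A}_{m,i}^{k-l}$ with $0 \le i \le m$; after this shift the coefficient $\binom{m-i}{j}$ in front of $\mathcal{A}_{m,i-1}^{k-1}$ becomes $\binom{m-1-i}{j}$, and one has to absorb the mismatch using Pascal again. The factors of $(k-1)$, $t_k$ and $t_k^2$ produced by Lemma \ref{mathcalA relation}, together with the exponent identity of Lemma \ref{C1} (which says $\mathcal{A}_{m,i-1}^{k-1}(t_{k-1})$ is a monomial of degree $k-1-m+(i-1)$, hence $C_1$ acts as multiplication by $-(k-m+i-1)t_k$ up to the scalar), are exactly what the definitions of $B_1,C_1,B_2,C_2$ encode. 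So the heart of the argument is to check that the linear combination of $\mathcal{A}_{m,i}^{k-l}$-terms with $t_k$-power weights produced by expanding the right-hand side matches, term by term, the result of applying $B_l$, $C_l$ to those same polynomials — using $B_1(\mathcal{A}_{m,i}^{k-1}) = (k-1)\mathcal{A}_{m,i}^{k-1}$, $B_2(\mathcal{A}_{m,i}^{k-2}) = (k-1)t_k\mathcal{A}_{m,i}^{k-2}$, $C_2(\mathcal{A}_{m,i-1}^{k-2}) = -(k-1)t_k^2\mathcal{A}_{m,i-1}^{k-2}$, and $C_1(\mathcal{A}_{m,i-1}^{k-1}) = -(k-m+i-1)t_k\mathcal{A}_{m,i-1}^{k-1}$ from Lemma \ref{C1}.

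I expect the main obstacle to be the clean handling of the index-shift in the $C_1$ and $C_2$ contributions: the term $-(k-m+i-2)t_k\binom{m+1-i}{j}\mathcal{A}_{m,i-1}^{k-1}(t_k)$ in the expansion must be matched against $C_1$ applied to $\binom{m-i}{j}\mathcal{A}_{m,i}^{k-1}(t_{k-1})$ and against the $B_1$-term $B_1(\binom{m-i}{j-1}\mathcal{A}_{m,i}^{k-1})$, and getting the coefficient $k-m+i-2$ versus $k-(m+1)+i-1$ to agree requires carefully using that we are on the $(m+1)$-level on the left and the $m$-level on the right, so the ``$m$'' in the exponent formula of Lemma \ref{C1} shifts accordingly. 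Once the reindexing is done correctly, verifying the identity reduces to matching the four groups of terms (one for each of $B_1+C_1$, $B_2+C_2$, $B_1$, $B_2$) against the four groups produced by Lemma \ref{mathcalA relation} combined with the two Pascal splittings, which is routine but must be laid out carefully. I would also double-check the boundary cases $i=0$, $i=m+1$, $j=0$ and $k$ at the extremes $m+2$ and $2m+2$, where some of the $\mathcal{A}$-polynomials vanish by the support condition $A_{m,i}^k = 0$ for $k \le m$ or $k \ge 2m+1$, to confirm no spurious terms survive.
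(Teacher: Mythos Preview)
Your approach will work and rests on the same two ingredients as the paper's proof---the recurrence expressing $\mathcal{A}_{m+1,i}^k$ in terms of $\mathcal{A}_{m,\cdot}^{k-l}$, together with Pascal's identity---but you are taking an unnecessary detour. The paper does not go through Lemma~\ref{mathcalA relation} at all. Instead it uses the more primitive operator-level identity
\[
\mathcal{A}_{m+1,i}^k(t_k) \;=\; \sum_{l=1}^2 \Big(B_l\big(\mathcal{A}_{m,i}^{k-l}(t_{k-l})\big) + C_l\big(\mathcal{A}_{m,i-1}^{k-l}(t_{k-l})\big)\Big),
\]
which is immediate from the \emph{definition} of $\mathcal{A}_{m,i}^k$ as a sum over operator words $Y_m\cdots Y_1(t_0^0)$: just peel off the outermost letter $Y_{m+1}$ and group according to whether it is $B_1,B_2,C_1,$ or $C_2$. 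With this in hand the lemma is a short direct computation: on the left-hand side collect the $B_l$-terms (their coefficients $\binom{m-i}{j}+\binom{m-i}{j-1}$ combine to $\binom{m+1-i}{j}$ by Pascal) and the $C_l$-terms (coefficient $\binom{m-i}{j}$, which after the shift $i\mapsto i-1$ becomes $\binom{m+1-i}{j}$ as well), and read off the right-hand side term by term in $i$. Your route instead first \emph{evaluates} the operators via Lemma~\ref{mathcalA relation} to get explicit scalar and $t_k$-power factors, and then has to \emph{un-evaluate} them to recognize the $B_l,C_l$ on the left; that is the source of your worry about matching $k-m+i-2$ against $k-(m+1)+i-1$ (note these are literally equal, so once the bookkeeping is set up the ``obstacle'' disappears). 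Finally, no induction on $m$ is needed: the identity holds for each fixed $m$ directly from the definition-level recurrence above, and indeed your sketch never actually invokes an inductive hypothesis.
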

\begin{proof}
From the definition of the operators and $\mathcal{A}_{m,i}^{k}(t_{k})$, we have 
\begin{equation}\label{i=0}
B_1(\mathcal{A}_{m,0}^{k-1}(t_{k-1}))+ B_2(\mathcal{A}_{m,0}^{k-2}(t_{k-2})) = \mathcal{A}_{m+1,0}^{k}(t_{k}),
\end{equation}
\begin{equation}\label{0<i<m+1}
B_1(\mathcal{A}_{m,i}^{k-1}(t_{k-1}))+ B_2(\mathcal{A}_{m,i}^{k-2}(t_{k-2}))+C_1(\mathcal{A}_{m,i-1}^{k-1}(t_{k-1}))+ C_2(\mathcal{A}_{m,i-1}^{k-2}(t_{k-2})) = \mathcal{A}_{m+1,i}^{k}(t_{k}),
\end{equation}
and 
\begin{equation}\label{i=m+1}
C_1(\mathcal{A}_{m,m}^{k-1}(t_{k-1}))+ C_2(\mathcal{A}_{m,m}^{k-2}(t_{k-2})) = \mathcal{A}_{m+1,m+1}^{k}(t_{k}).
\end{equation}
Now 
\begin{equation} \label{binomial j}
{N \choose j}+ {N \choose j-1}  = {N+1 \choose j}
\end{equation}
for all integers $N,j\geq0$. 

From equations \eqref{i=0} and \eqref{binomial j}, it follows that 
\[
\sum_{l=1}^2 B_l({m \choose j}\mathcal{A}_{m,0}^{k-l}(t_{k-l}))+ B_l({m \choose j-1}\mathcal{A}_{m,0}^{k-l}(t_{k-l}))  = {m+1 \choose j} \mathcal{A}_{m+1,0}^{k}(t_{k}).
\]
This gives the $i=0$ term on the right side of the lemma statement.

From equations \eqref{0<i<m+1} and \eqref{binomial j}, for $1\leq i\leq m$, it follows that 
\begin{align*}
&\sum_{l=1}^2 B_l({m -i\choose j}\mathcal{A}_{m,i}^{k-l}(t_{k-l}))+ B_l({m-i \choose j-1}\mathcal{A}_{m,i}^{k-l}(t_{k-l})) \\
+&\sum_{l=1}^2 C_l({m-(i-1) \choose j}\mathcal{A}_{m,i-1}^{k-l}(t_{k-l}))\\ 
= &{m+1-i \choose j}\mathcal{A}_{m+1,i}^k(t_k).
\end{align*}
This gives the $i$-th term on the right side of the lemma statement for $1 \leq i \leq m$.

From equations \eqref{i=m+1}, it follows that 
\[
\sum_{l=1}^2 C_l({m -m \choose j}\mathcal{A}_{m,m}^{k-l}(t_{k-l})) = {m+1 -(m+1) \choose j} \mathcal{A}_{m+1,m+1}^{k}(t_{k}).
\]
This gives the $i=m+1$ term on the right side of the lemma statement. This completes the proof.

\end{proof}

\begin{theorem}\label{A alt positivity}
For $m \geq 1$; $m+1\leq k\leq 2m$; and $0\leq j \leq m$, 
\[
\sum_{i=0}^m (-1)^i {m-i \choose j} A_{i,m}^k \geq 0.
\]
\end{theorem}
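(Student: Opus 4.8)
The plan is to push everything through Corollary~\ref{A poly A} and then run an induction on $m$ driven by Lemmas~\ref{preserve} and~\ref{BCB}. By Corollary~\ref{A poly A} we have $(-1)^iA_{m,i}^k=\mathcal{A}_{m,i}^k(1)$, so
\[
\sum_{i=0}^m (-1)^i \binom{m-i}{j} A_{m,i}^k
 = \sum_{i=0}^m \binom{m-i}{j}\mathcal{A}_{m,i}^k(1) = P_{m,j}^k(1),
\qquad\text{where}\quad
P_{m,j}^k(t_k) := \sum_{i=0}^m \binom{m-i}{j}\mathcal{A}_{m,i}^k(t_k).
\]
Thus it suffices to show $P_{m,j}^k(1)\ge 0$ for admissible triples, i.e.\ $m\ge 1$, $m+1\le k\le 2m$, $0\le j\le m$. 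The key claim is that for every such triple, $P_{m,j}^k(t_k)$ is a linear combination with \emph{non-negative} coefficients of polynomials $f(k,M,N)=t_k^M(1-t_k)^N$ that each satisfy the positivity condition. Granting the claim, the theorem is immediate: $f(k,M,N)(1)=1$ when $N=0$ and $=0$ when $N\ge 1$, so $P_{m,j}^k(1)$ is a non-negative combination of $0$'s and $1$'s, hence $\ge 0$.

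It remains to prove the claim by induction on $m$. For the base case $m=1$ the only admissible value is $k=2$; a direct computation from the definition of $\mathcal{A}$ gives $\mathcal{A}_{1,0}^2(t)=t=f(2,1,0)$ and $\mathcal{A}_{1,1}^2(t)=-t^2$, so $P_{1,0}^2(t)=t-t^2=f(2,1,1)$ and $P_{1,1}^2(t)=t=f(2,1,0)$, and both $f(2,1,0)$ and $f(2,1,1)$ satisfy the positivity condition with coefficient $+1$. For the inductive step, Lemma~\ref{BCB} applied with index $m+1$ says exactly that
\[
P_{m+1,j}^k(t_k)
 = \sum_{l=1}^2\Big( (B_l+C_l)\big(P_{m,j}^{k-l}(t_{k-l})\big)
 + B_l\big(P_{m,j-1}^{k-l}(t_{k-l})\big)\Big).
\]
Every $P_{m,j'}^{k'}$ occurring here is either identically $0$ --- which happens precisely at the degenerate shifts $k'=m$ or $k'=2m+1$ (where $\mathcal{A}_{m,i}^{k'}\equiv 0$) and at $j'=-1$ or $j'=m+1$ (where $\binom{m-i}{j'}=0$ for all $0\le i\le m$) --- or else $(m,k',j')$ is admissible, in which case the induction hypothesis applies; either way $P_{m,j'}^{k'}(t_{k'})$ is a non-negative combination of positivity-satisfying terms $f(k',\,\cdot\,,\,\cdot\,)$. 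Since Lemma~\ref{preserve} says each of $B_1+C_1$, $B_2+C_2$, $B_1$, $B_2$ sends a positivity-satisfying $f$ to a non-negative combination of positivity-satisfying $f$'s, the displayed right-hand side is again such a combination, now in $t_k$. This is the claim for $m+1$ and closes the induction.

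The genuine content --- the identity $(-1)^iA_{m,i}^k=\mathcal{A}_{m,i}^k(1)$, the operator recursion of Lemma~\ref{BCB}, and the stability of the positivity condition in Lemma~\ref{preserve} --- is already in hand, so the remaining work is assembly. The one delicate point is making the induction actually close: checking that no operator outside the four covered by Lemma~\ref{preserve} ever appears in Lemma~\ref{BCB} (it does not --- only $B_l+C_l$ and $B_l$ occur, never a bare $C_l$), and that every level-$m$ term demanded by an admissible $(m+1,k,j)$ lies in the union of the ``admissible'' and ``identically zero'' cases, i.e.\ the boundary bookkeeping above. I expect that edge-case verification, together with the small base-case computation, to be the main thing to get right; no new idea should be needed.
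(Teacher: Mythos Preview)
Your proposal is correct and follows essentially the same route as the paper: reduce to $\mathcal{A}_{m,i}^k(1)$ via Corollary~\ref{A poly A}, then prove by induction on $m$ that $P_{m,j}^k(t_k)$ is a non-negative combination of positivity-satisfying $f(k,M,N)$, using Lemma~\ref{BCB} for the recursion and Lemma~\ref{preserve} for stability. Your treatment is slightly more streamlined---you handle the boundary cases $j\in\{0,m+1\}$ and $k-l\in\{m,2m+1\}$ uniformly as ``identically zero'' terms rather than splitting into three $j$-cases as the paper does---but the substance is identical.
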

\begin{proof}
We prove that the polynomial 
\[
\sum_{i=0}^m  {m-i \choose j} \mathcal{A}_{m,i}^k(t_k)
\]
is a linear combination with positive coefficients of terms of the form $f(k,M,N)$ that satisfy the positivity condition. Then the theorem follows from this fact and 
\[
(-1)^iA_{m,i}^k = \mathcal{A}_{m,i}^k(1)
\]
from Corollary \ref{A poly A}.

We use induction on $m$. The statement for $m=1$ follows from Lemma \ref{BCB}; when $j=0$, we have 
\[
\sum_{i=0}^1  {1-i \choose 0} \mathcal{A}_{1,i}^2(t_2) = f(2,1,1)
\]
and when $j=1$ we have 
\[
\sum_{i=0}^1  {1-i \choose 1} \mathcal{A}_{1,i}^2(t_2) = f(2,1,0).
\]
We this assume that the statement is true for some $m$ and all $j$ with $0 \leq j \leq m$. We use Lemma \ref{BCB} to show that the polynomial
\[
\sum_{i=0}^{m+1}  {m+1-i \choose j} \mathcal{A}_{m+1,i}^k(t_k)
\] 
is a a positive-coefficient linear combination of $f(k,M,N)$ that satisfy the positivity condition in the following cases for $j$. 

For $j=0$, we have from Lemma \ref{BCB}
\[
\sum_{l=1}^2 (B_l+C_l)(\sum_{i=0}^m {m-i \choose 0}\mathcal{A}_{m,i}^{k-l}(t_{k-l}) );
\]
the polynomial 
\[
\sum_{i=0}^m {m-i \choose 0}\mathcal{A}_{m,i}^{k-l}(t_{k-l}) 
\]
is a positive-coefficient linear combination of terms that satisfy the positivity condition by the induction hypothesis, and the operators $B_1+C_1$ and $B_2+C_2$ preserve the positivity condition.

For $1\leq j\leq m$, we have from Lemma \ref{BCB}
\[
\sum_{l=1}^2 (B_l+C_l)(\sum_{i=0}^m {m-i \choose j}\mathcal{A}_{m,i}^{k-l}(t_{k-l}) ) + B_l( \sum_{i=0}^m{m-i \choose j-1}\mathcal{A}_{m,i}^{k-l}(t_{k-l})) ;
\]
each polynomial
\[
\sum_{i=0}^m {m-i \choose j}\mathcal{A}_{m,i}^{k-l}(t_{k-l}) \text{ and }  \sum_{i=0}^m{m-i \choose j-1}\mathcal{A}_{m,i}^{k-l}(t_{k-l})
\]
 is a positive-coefficient linear combination of terms that satisfy the positivity condition by the induction hypothesis, and the operators $B_1+C_1$, $B_2+C_2$, $B_1$ and $B_2$ preserve the positivity condition.

For $j =m+1$, we have from Lemma \ref{BCB}
\[
 \sum_{l=1}^2 B_l( \sum_{i=0}^m{m-i \choose m}\mathcal{A}_{m,i}^{k-l}(t_{k-l})) ;
\]
the polynomial 
\[
\sum_{i=0}^m{m-i \choose m}\mathcal{A}_{m,i}^{k-l}(t_{k-l})
\]
is a positive-coefficient linear combination of terms that satisfy the positivity condition by the induction hypothesis, and the operators $B_1$ and $B_2$ preserve the positivity condition.
 This proves the induction step and completes the proof.
\end{proof}

\begin{theorem} \label{g positivity}
As a polynomial in $s$ and $x$,
\[
(1+x)^n g_n(s,1+x)
\]
has positive coefficients.
\end{theorem}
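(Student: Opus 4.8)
The plan is to derive Theorem \ref{g positivity} as an immediate consequence of the coefficient expansion \eqref{g coefficients} together with the non-negativity established in Theorem \ref{A alt positivity}. First I would recall that \eqref{g coefficients} writes
\[
(1+x)^n g_n(s,1+x) = s^n + \sum_{m=1}^n \sum_{j=0}^m s^{n-m} x^j \sum_{k=m+1}^{2m} {n \choose k} \sum_{i=0}^m (-1)^i {m-i \choose j} A_{m,i}^k,
\]
so that every coefficient of $(1+x)^n g_n(s,1+x)$ is either the coefficient $1$ of $s^n$, or a finite sum over $k$ of terms ${n \choose k} \big( \sum_{i=0}^m (-1)^i {m-i \choose j} A_{m,i}^k \big)$.

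The key step is then to observe that each factor in those terms is non-negative: the binomial coefficient ${n \choose k}$ is a non-negative integer, and by Theorem \ref{A alt positivity} the inner sum $\sum_{i=0}^m (-1)^i {m-i \choose j} A_{m,i}^k$ is $\geq 0$ for all $m \geq 1$, all $k$ with $m+1 \leq k \leq 2m$, and all $j$ with $0 \leq j \leq m$ — which is exactly the range of summation appearing in \eqref{g coefficients}. Hence every coefficient of $x^j s^{n-m}$ in $(1+x)^n g_n(s,1+x)$ is a non-negative integer combination of non-negative quantities, and is therefore non-negative. Since the coefficient of $s^n$ equals $A_{0,0}^0 = 1 > 0$, the polynomial is not identically zero, and all its coefficients are non-negative, which is the claim.

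I do not expect any genuine obstacle here: the substantive work — the recurrence for $A_{m,i}^k$, the operator formalism with $B_1, B_2, C_1, C_2$, the positivity-condition invariance in Lemma \ref{preserve}, and the inductive argument of Theorem \ref{A alt positivity} — has already been carried out. The only minor point to be careful about is bookkeeping: checking that the ranges of $m$, $j$, and $k$ in \eqref{g coefficients} really do lie inside the hypotheses of Theorem \ref{A alt positivity} (they do, since $A_{m,i}^k = 0$ unless $m+1 \leq k \leq 2m$, so the sum over $k$ in \eqref{g coefficients} may be freely restricted to that range), and noting that terms with $j > m$ or outside these ranges contribute nothing. One could also remark that the same reasoning shows the slightly stronger statement that $(1+x)^n g_n(s,1+x)$ has non-negative \emph{integer} coefficients, since all the $A_{m,i}^k$ and binomial coefficients are integers.
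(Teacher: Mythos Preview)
Your proposal is correct and follows exactly the same approach as the paper: the paper's own proof is the single sentence ``The theorem follows from equation \eqref{g coefficients} and Theorem \ref{A alt positivity},'' and you have simply spelled out the bookkeeping behind that sentence. Your added remarks about the range of $k$ and the integrality of the coefficients are accurate and compatible with the paper's argument.
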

\begin{proof}
The theorem follows from equation \eqref{g coefficients} and Theorem \ref{A alt positivity}.
\end{proof}

\begin{theorem} \label{series convergent}
Let 
\[
g_n(s,p) = \sum_{i=1}^{n} {n \brack i} \frac{(s)_i}{p^i}.
\]
Then for $p\geq 1$ and $s \in \mathbb{C}$, 
\[
\int_1^\infty e^{-u p}u^s \, du = \frac{e^{-p}}{p}(1+\sum_{n=1}^\infty \frac{g_n(s,p)}{(n+1)!})
\]
and the sum is absolutely convergent. 
\end{theorem}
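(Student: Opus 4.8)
The plan is to start from the integral representation obtained via the substitution $t = 1 - e^{-up}$, namely
\[
\int_1^\infty e^{-up} u^s \, du = \frac{1}{p^{s+1}} \int_{1-e^{-p}}^1 (-\log(1-t))^s \, dt,
\]
and to expand $(-\log(1-t))^s$ in its Taylor series about the point $t_0 = 1 - e^{-p}$, using the derivatives computed in Lemma~\ref{log derivative}. At $t_0$ we have $1 - t_0 = e^{-p}$, so $-\log(1-t_0) = p$, and the $n$-th Taylor coefficient is
\[
\frac{1}{n!}\left(\tfrac{d}{dt}\right)^n (-\log(1-t))^s \Big|_{t=t_0} = \frac{1}{n!} \cdot \frac{1}{e^{-np}} \sum_{i=1}^n {n \brack i}(s)_i \, p^{s-i} = \frac{e^{np} p^s}{n!} \cdot \frac{g_n(s,p)}{p^n} \cdot p^n \cdot p^{-s}\cdot p^{s}\, ,
\]
more cleanly: the coefficient equals $\dfrac{e^{np}\,p^{s}}{n!}\,\dfrac{g_n(s,p)}{p^{n}}\cdot p^{n-n}$; after simplification it is $\dfrac{e^{np} p^{s-n}}{n!}\sum_{i=1}^n {n\brack i}(s)_i p^{n-i} = \dfrac{e^{np} p^{s}}{p^{n} n!}\,\big(p^{n} g_n(s,p)/\!\ldots\big)$—I will write this carefully so that the $i$-th term produces exactly ${n \brack i}(s)_i p^{s-i} e^{np}/n!$. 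Then I integrate term by term over $t \in [t_0, 1]$: since $\int_{t_0}^1 (t-t_0)^n\,dt = (1-t_0)^{n+1}/(n+1) = e^{-(n+1)p}/(n+1)$, the $n$-th term of the integrated series becomes $\dfrac{e^{np}\,e^{-(n+1)p}}{(n+1)\,n!}\,p^{s}\,g_n(s,p)\,p^{-0} = \dfrac{e^{-p}}{(n+1)!}\,p^{s}\,g_n(s,p)$ (with the $p^{s}$ and the $n=0$ term $\int_{t_0}^1 dt = e^{-p}$ giving the leading $1$). Multiplying by $p^{-s-1}$ yields exactly $\dfrac{e^{-p}}{p}\big(1 + \sum_{n\geq 1} g_n(s,p)/(n+1)!\big)$, which is the claimed identity.

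The remaining work is to justify the term-by-term integration, i.e. absolute convergence, and here I can use the positivity result. First, for $p = 1+x \geq 1$ and $s$ real and $\geq 0$, Theorem~\ref{g positivity} gives $(1+x)^n g_n(s,1+x) \geq 0$, and in general I will bound $|g_n(s,p)| \leq \sum_{i=1}^n {n \brack i} |(s)_i| / p^i \leq \sum_{i=1}^n {n \brack i} (|s|+n)^i = $ (a shifted rising/falling factorial) which, since $\sum_{i=0}^n {n\brack i} y^i = (y+1)(y+2)\cdots(y+n) = \Gamma(y+n+1)/\Gamma(y+1)$, is at most $\Gamma(|s|+2n+1)/\Gamma(|s|+n+1) \leq (|s|+2n)^n$. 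Dividing by $(n+1)!$ and applying Stirling's formula shows $|g_n(s,p)|/(n+1)! = O\big((|s|+2n)^n / (n+1)!\big)$, and since $(2n)^n/n! \sim (2e)^n/\sqrt{2\pi n} \to \infty$ this crude bound is not summable. So I need the sharper estimate coming directly from the Taylor remainder: the radius of convergence of the Taylor series of $(-\log(1-t))^s$ about $t_0 = 1 - e^{-p}$ is the distance from $t_0$ to the nearest singularity, which is the branch point at $t = 1$ (distance $e^{-p}$) — so the series converges for $|t - t_0| < e^{-p}$, and the interval of integration $[t_0, 1]$ has length exactly $e^{-p}$, i.e. it is the half-open radius of convergence. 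The clean way around the endpoint issue is to note that $(-\log(1-t))^s$ extends continuously (indeed is integrable) up to $t=1$ when $\Re(s) > -1$, handle that case by dominated convergence / Abel summation on the boundary of the disc, and then extend to all $s \in \mathbb{C}$ by analytic continuation in $s$: both sides of the claimed identity are entire in $s$ for fixed $p \geq 1$ (the right side because the series converges locally uniformly in $s$, once convergence is established).

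Thus the key steps in order are: (1) record $1 - t_0 = e^{-p}$, substitute into Lemma~\ref{log derivative}, and identify the Taylor coefficients with ${n \brack i}(s)_i p^{s-i} e^{np}/n!$; (2) integrate $\int_{t_0}^1 (t-t_0)^n dt = e^{-(n+1)p}/(n+1)$ term by term and collect the powers of $e^{-p}$ and $p$ to reach the stated formula formally; (3) establish absolute and locally uniform (in $s$) convergence of $\sum g_n(s,p)/(n+1)!$ — via the Taylor-remainder/radius-of-convergence estimate $|g_n(s,p)| \le C_s\, e^{(n+1)p} \cdot (\text{polynomial in } n)$ obtained from Cauchy's estimates on a circle of radius slightly less than $e^{-p}$ centered at $t_0$, which beats the $1/(n+1)!$ — and justify the interchange of sum and integral by dominated convergence for $\Re(s) > -1$; (4) conclude for all $s \in \mathbb{C}$ by analytic continuation. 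The main obstacle is step (3): the interval of integration reaches exactly to the branch-point singularity at $t = 1$, so the Taylor series is being integrated up to the boundary of its disc of convergence. Resolving this requires either a careful endpoint (Abel-type) argument using that $(-\log(1-t))^s = (\text{something})\cdot(1-t)^0$ blows up only logarithmically — hence mildly enough to be dominated — or, more robustly, Cauchy's estimate $|g_n(s,p)| \ll_{s,p,\varepsilon} (e^{-p}-\varepsilon)^{-n}$ combined with the observation that after multiplying by $\int_{t_0}^{1-\varepsilon'}$ versus the small tail near $t=1$, the tail contributes $O(\varepsilon'\log(1/\varepsilon')^{\Re s})$ uniformly, which vanishes; I expect to spend most of the proof making this endpoint estimate rigorous.
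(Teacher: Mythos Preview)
Your formal derivation (steps (1)--(2), and the analytic-continuation step (4)) matches the paper's argument. The divergence is in step (3), the justification of the term-by-term integration, and here you have missed the point of the positivity theorem.

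You mention Theorem~\ref{g positivity} but then abandon it for a crude triangle-inequality bound that you correctly observe is not summable, and pivot to Cauchy estimates on a disc of radius $e^{-p}-\varepsilon$ about $t_0$. That route runs straight into the endpoint obstacle you flag: Cauchy's inequality gives $|g_n(s,p)|/(n+1)!$ of order $(1-\varepsilon e^{p})^{-n}/(n+1)$, which diverges, and the Abel/tail argument you sketch, while not impossible, is genuinely delicate and you have not carried it out.

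The paper sidesteps all of this with two uses of positivity. First, for real $s\ge 0$ and $p\ge 1$, Theorem~\ref{g positivity} gives $g_n(s,p)\ge 0$, so every term $e^{np}(t-t_0)^n g_n(s,p)/n!$ in the Taylor expansion is nonnegative on $[t_0,1)$; Tonelli's theorem then justifies the interchange of sum and integral with no estimate whatsoever, and finiteness of $\int_1^\infty e^{-up}u^s\,du$ shows the resulting series converges. Second, for arbitrary $s\in\mathbb{C}$, the same positivity of the coefficients of $(1+x)^n g_n(s,1+x)$ in $s$ and $x$ implies that for fixed $p\ge 1$ the polynomial $g_n(\,\cdot\,,p)$ has nonnegative coefficients in $s$, hence
\[
|g_n(s,p)| \le g_n(|s|,p).
\]
Comparison with the already-established convergent series for $|s|$ gives absolute convergence immediately. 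This inequality is the step you were looking for and did not find; once you have it, your endpoint difficulty evaporates and the Cauchy-estimate machinery is unnecessary.
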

\begin{proof}
By Lemma \ref{log derivative}, we have for $1-e^{-p} \leq t < 1$
\[
(-\log(1-t))^s = p^s(1+  \sum_{n=1}^\infty e^{n p} \frac{(t - (1-e^{-p}))^n}{n!} g_n(s,p) ).
\]
Now 
\begin{align} \nonumber
\int_1^\infty e^{-u p} u^s \, du  &= p^{-s-1}\int_{1-e^{-p}}^1 (-\log(1-t)^s \, dt \\ \label{int and sum}
& =p^{-s-1}\int_{1-e^{-p}}^1 p^s(1+  \sum_{n=1}^\infty e^{n p} \frac{(t - (1-e^{-p})^n}{n!} g_n(s,p) ) \, dt
\end{align}
If $s$ is positive and $p\geq 1$, then by Theorem \ref{g positivity} each $g_n(s,p) \geq 0$, and by Tonelli's theorem we may interchange the integration and summation at line \eqref{int and sum} to obtain
\begin{equation}
\int_1^\infty e^{-u p} u^s \, du=\frac{e^{-p}}{p}(1+  \sum_{n=1}^\infty \frac{g_n(s,p)}{(n+1)!}  ).
\end{equation}
Now suppose $s \in \mathbb{C}$. It also follows from Theorem \ref{g positivity} that
\[
|g_n(s,p)| \leq g_n(|s|,p).
\]
Then
\begin{align*}
|\sum_{n=1}^\infty \frac{g_n(s,p)}{(n+1)!}|  &\leq  \sum_{n=1}^\infty \frac{|g_n(s,p)|}{(n+1)!}\\ 
& \leq  1+\sum_{n=1}^\infty \frac{g_n(|s|,p)}{(n+1)!} \\
& \leq  \int_1^\infty e^{-up} u^{|s|} \, du
\end{align*}
Thus 
\[
 \sum_{n=1}^\infty \frac{g_n(s,p)}{(n+1)!}
\]
is absolutely convergent for any $s \in \mathbb{C}$. Now, a series of polynomials with positive coefficients that is absolutely convergent on $\mathbb{C}$ is holomorphic on $\mathbb{C}$ by the standard theorem that a a sequence of holomorphic functions that is uniformly convergent on compacts subsets of a domain $\Omega$ is holomorphic on $\Omega$ (see Stieltjes \cite{Stieltjes} and \cite{Stein} for more information). Thus for fixed $p\geq1$
\[
\frac{e^{-p}}{p}(1+  \sum_{n=1}^\infty \frac{g_n(s,p)}{(n+1)!}  )
\]
is entire in $s$ and agrees with 
\[
\int_1^\infty e^{-up} u^s \, du
\]
when $s$ is positive and real. Therefore these two functions must agree for $s$ on $\mathbb{C}$. 
\end{proof}

\section{Application to the Riemann xi function}

From the definition of the Riemann xi function $\xi(s)$ in equation \eqref{xi}, we have 
\[
\xi(s) = \frac{1}{2}(1 - s(1-s)\sum_{n=1}^\infty (\int_1^\infty e^{-\pi n^2} u^{\frac{1-s}{2}-1}\, du +\int_1^\infty e^{-\pi n^2} u^{\frac{s}{2}-1}\, du)).
\] 
For integer $k \geq 1$, let $b_k$ denote the constants
\[
b_k = \sum_{n=1}^\infty \frac{e^{-\pi n^2}}{(\pi n^2)^k}. 
\]
We then apply Theorem \ref{series convergent} to obtain
\begin{multline}
\pi^{-\frac{s}{2}}\Gamma(\frac{s}{2})\zeta(s) = 2b_1 + \sum_{n=1}^\infty \frac{1}{(n+1)!} \sum_{i=1}^{n} b_{i+1} {n \brack i} \left((\prod_{j=1}^{i} \frac{1-s}{2}-j)+(\prod_{j=1}^{i} \frac{s}{2}-j)\right) \\ 
-(\frac{1}{1-s}+ \frac{1}{s})
\end{multline}
where the rearrangement of the sum is justified by the absolute convergence of the series from Theorem \ref{series convergent}.
We may re-write this as 
\begin{multline}
-s(1-s)\pi^{-\frac{s}{2}}\Gamma(\frac{s}{2})\zeta(s) = 1 - 2\sum_{n=0}^\infty \frac{1}{(n+1)!} \sum_{i=0}^{n} b_{i+1} {n \brack i} \left(s(\prod_{j=0}^{i} \frac{1-s}{2}-j)+(1-s)(\prod_{j=0}^{i} \frac{s}{2}-j)\right).  
\end{multline}

\section{Further Work}

\begin{itemize}

\item See if the sum of of all the $s^{n-m}x^j$ can be re-arranged by grouping together the positive terms and indexing them by paths in a graph. Perhaps the constants 
\[
\sum_{n=1}^\infty \frac{e^{-\pi n^2}}{(\pi n ^2)^k}
\]  
then become different constants which can be analyzed by Poisson summation or something else. 

\item Apply these formulas to $\xi(s)$ using a different integral kernel such the Polya-Schur kernel. 

\item See how the proofs for Jensen polynomials for $\xi$ of degree 2 and 3 for having real zeros can be expressed using these formulas for $a_k$.

\item See if these formulas have $q$-analogues. 

\item See if series can be obtained using the formula
\[
\int_1^\infty u^s e^{-up}\, du=\sum_{n=0}^\infty (2^n)^{s+1}  e^{-2^n p} \int_{0}^1 (1+u)^s e^{-2^n u p} \, du.
\]

\item See if there are more quantitative bounds to prove absolute convergence of the series.

\end{itemize}

\end{document}